	\definecolor{lightblue}{rgb}{.60,.60,1}
	\definecolor{lightred}{rgb}{1, .60,.60}
\newtheorem{theorem}{Theorem}
\newtheorem{lemma}[theorem]{Lemma}
\newtheorem{proposition}[theorem]{Proposition}
\newtheorem{question}[theorem]{Question}
\theoremstyle{definition}
\newtheorem{definition}[theorem]{Definition}
\theoremstyle{remark}
\newtheorem{rmks}{Remarks}[section]
\newtheorem*{Examples}{Examples}
\newcommand{\A}{\mathcal{A}}
\newcommand{\FF}{\mathbb{F}}
\newcommand{\N}{\mathbb{N}}
\newcommand{\R}{\mathbb{R}}
\newcommand{\dom}{\text{dom}}
\newcommand{\po}{\mathbb{P}}
\definecolor{purple}{rgb}{.9,0.2,.9}
\newcommand{\cs}{2^\omega}
\newcommand{\uh}{{\upharpoonright}}
\renewcommand{\phi}{\varphi}
\title{From Bi-immunity to Absolute Undecidability}
\author{Laurent Bienvenu\thanks{This work was done as part of the France-Berkeley project \emph{Algorithmic randomness for non-uniform measures}.}, Adam R. Day\thanks{Day was supported by a Miller Research Fellowship in the Department of Mathematics at the University of California, Berkeley.}, 
Rupert H\"olzl\thanks{H\"olzl was supported by a Feodor Lynen postdoctoral research fellowship by the
Alexander von Humboldt Foundation.}}
\date{\today} 
\begin{document}


\maketitle


\begin{abstract}
An infinite binary sequence~$A$ is absolutely undecidable if it is impossible to compute~$A$ on a set of positions of positive upper density. Absolute undecidability is a weakening of bi-immunity.  Downey, Jockusch and Schupp~\cite{DowneyJockuschSchupp} asked whether, unlike the case for bi-immunity,  there is an absolutely undecidable set in every non-zero Turing degree. We provide a positive answer to this question by applying techniques from coding theory. We show how to use Walsh-Hadamard codes to build a truth-table functional which maps any sequence $A$ to a sequence $B$, such that given any restriction of $B$ to a set of positive upper density, one can recover $A$. This implies that if $A$ is non-computable, then $B$ is absolutely undecidable.
 Using a forcing construction, we show that this result cannot be strengthened in any significant fashion. 
\end{abstract}

\section{Introduction}
Let $A \in \cs$ be a non-computable infinite binary sequence. How can we gauge the extent to which $A$ is close to being computable? 
Many basic concepts in computability theory  can be seen as providing a partial answer to this question. For example, if $A$ is low, $\Delta^0_2$, or of minimal degree, then $A$ could be seen as being close to computable.

In this paper, we will consider a different perspective. Given~$A$ and a partial computable function $\phi$ we will say $A$ \textit{extends} $\phi$ if for all $x \in \dom(\phi)$, $A(x) = \phi(x)$ (where $A(n)$ is the value of the $n$-th bit of~$A$). We will measure how close $A$ is to being computable by considering the size of the domain of any partial computable function that $A$ extends (the larger the domain, the closer $A$ is to being computable). 

A set  $A$ is bi-immune if neither $A$ nor its complement contain an infinite c.e.\ (or, equivalently, computable) subset. It is not difficult to see that $A$ is bi-immune if and only if $A$ does not extend a partial computable function with infinite domain. Hence, from this paper's perspective,  bi-immune sets are as far from computable as possible. 
Once bi-immune sets are shown to exist, the property of being bi-immune is clearly not invariant under Turing equivalence. 
However, it does make sense to ask whether every non-zero Turing degree contains a bi-immune set.  The answer is~no.

\begin{theorem}[Jockusch \cite{Jockusch}]\label{thm:jockush}
There exists a non-zero  Turing degree such that no element of this degree is bi-immune.
\end{theorem}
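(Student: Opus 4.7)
The plan is to construct a non-computable $A \in \cs$ such that for every Turing functional $\Phi_e$, either $\Phi_e^A$ is partial or $\Phi_e^A$ is not bi-immune. Since every $B \equiv_T A$ is of the form $\Phi_e^A$ for some $e$ with $\Phi_e^A$ total, this yields a non-zero Turing degree whose every element fails to be bi-immune.

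I would use Spector-style forcing with conditions being perfect computable subtrees $T \subseteq \str$, building a decreasing sequence of conditions $\str = T_0 \supseteq T_1 \supseteq \cdots$ and letting $A$ be the unique path through $\bigcap_s [T_s]$. Diagonalization requirements $R_e : A \neq \phi_e$ are met in the standard way by passing to a subtree whose every path differs from $\phi_e$ at some position. For each $e$, the substantive requirement $S_e$ --- that $\Phi_e^A$ be partial or fail to be bi-immune --- is met by the following key lemma: given a perfect computable $T$ and an index $e$, there is a perfect computable subtree $T^\star \subseteq T$ satisfying either \emph{(i)} $\Phi_e^X(n) \diverge$ for some fixed $n$ and all $X \in [T^\star]$, or \emph{(ii)} there is a bit $c \in \{0,1\}$ and an infinite c.e.\ set $W \subseteq \omega$ with $\Phi_e^X(n) = c$ for all $X \in [T^\star]$ and $n \in W$.

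To prove the key lemma, one first looks for a node $\sigma \in T$ and input $n$ with $\Phi_e^\tau(n) \diverge$ for all $\tau \succeq \sigma$ in $T$; if such a node exists, the subtree above $\sigma$ realizes case \emph{(i)}. Otherwise, an $e$-splitting analysis \`a la Spector yields a perfect computable subtree of $T$ on which $\Phi_e^X$ is total for every $X$. Case \emph{(ii)} is then established by iteratively committing positions: at each step, we pick a new input $n$, find by compactness a uniform level $k_n$ at which all current branches give $\Phi_e^\tau(n) \halts$, and either (if the value is constant across branches) add $n$ to the commitment list, or (if both values occur) prune to one side --- which remains perfect, being a clopen subset of a perfect set. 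Pigeonhole on the sequence of committed bits then selects an infinite subset realizing a single bit $c$, furnishing the required $W$.

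The main obstacle is to ensure that $W$ is genuinely c.e.\ and not merely c.e.\ in $\emptyset'$: the case dichotomy and the $e$-splitting analysis are both $\emptyset'$-computable, so a direct implementation would produce only a $\Sigma^0_2$ set of commitments. I would address this via a finite-injury priority construction in which the case-\emph{(ii)} iteration runs computably and enumerates commitments into $W$ as they occur, while a parallel search for a case-\emph{(i)} witness is allowed to injure lower-priority commitments finitely often per requirement. Combined with the diagonalization requirements, this produces a non-computable $A$ whose Turing degree contains no bi-immune element.
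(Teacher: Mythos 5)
Your overall framework — forcing with computable perfect trees, with requirements ensuring $A$ is non-computable and each $\Phi_e^A$ is either partial or non-bi-immune — is the right shape, and the statement of your key lemma is correct. But your proof of case~\emph{(ii)} has a genuine gap. The iterative ``commit or prune'' procedure is applied at every input $n$, pruning the tree whenever the branches disagree on $\Phi_e^\tau(n)$. Nothing prevents this from pruning at \emph{every} step (take $\Phi_e$ to be the identity and $T'=\cs$: the branches split on $\Phi_e^\tau(n)$ for every $n$), in which case the commitment set is empty, the pigeonhole step has nothing to select from, and the nested trees shrink to a single path — so there is no computable perfect $T^\star$ at the end of the process. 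Even when some positions do get committed, each commitment holds only on the then-current shrinking tree, so you never obtain a single fixed condition $T^\star$ together with an infinite $W$ unless you can argue the pruning stabilizes, which you do not.

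The needed move is to \emph{extend} rather than prune: at level $k$ with nodes $\sigma_1,\ldots,\sigma_{2^k}$, search for a fresh input $m$ and extensions $\tau_i\succeq\sigma_i$ in $T'$ such that $\Phi_e^{\tau_i}(m)$ all converge to a common value, commit $m$, split each $\tau_i$, and repeat. This yields a single computable perfect $T^\star$ and an infinite computable $W$ directly — but the real work is then to show the search always succeeds, which requires a dichotomy of the flavour of condition~$\star$ in the proof of Theorem~\ref{thm: tight}: either there is a $\sigma$ above which the $\Phi_e$-images of all paths agree modulo a small set (so agreement is easy to force), or there are large disagreements everywhere (where an inductive argument in the style of Lemma~\ref{lem: not star} produces the needed agreeing extensions). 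Your other worry, that $W$ might only be $\Sigma^0_2$ because the case split is $\emptyset'$-decided, is a non-issue: the case split is a one-shot decision determining which construction to run, and once an index for the totality-forcing subtree $T'$ is fixed the level-by-level search is computable relative to that index, so $W$ is computable. The finite-injury machinery you propose is therefore unnecessary and in any case does not address the actual gap.
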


To determine whether a sequence~$A$ is bi-immune, we asked whether $A$ extends a partial computable function with infinite domain. Essentially, bi-immunity  regards the domain of a partial computable function as  large if it is infinite. We can weaken the notion of bi-immunity by strengthening our concept of largeness.

\begin{definition}
\label{def:density}
Let $D \subseteq \N$. The \emph{upper density of $D$} is the quantity
\[\rho(D):=\limsup_{n\rightarrow \infty} \frac{|D \cap \{0, \ldots n-1\}|}{n}  .\]
When the upper density of $D$ is zero, we simply say that $D$ has \emph{density 0}.\footnote{Density,  defined as above but using $\lim$ instead of $\limsup$, is often denoted by $\rho$. As density  will not play a significant role in this paper we will simplify notation by using $\rho$ to denote upper density instead of the more common $\overline{\rho}$.}
\end{definition}


\begin{definition}[Myasnikov-Rybalov \cite{MyasnikovRybalov}] We say that $A \in \cs$ is \textit{absolutely undecidable} if there is no partial computable function $\phi: \N \rightarrow \{0,1\}$ whose domain has positive upper density and such that $\phi(n)=A(n)$ for all~$n$ in the domain of~$\phi$. 
\end{definition}





The notion of absolute undecidability comes from  the study of generic computability~\cite{DowneyJockuschSchupp, JockuschSchupp, KapovichEtAl}. A sequence $A \in \cs$ is generically computable if it can be ``computed modulo a set of density $0$", i.e., if there is a partial computable function $\varphi$ such that $\phi(n)=A(n)$ for all~$n$ in the domain of $\phi$, and $\rho(\N \setminus \dom(\phi))=0$. Absolute undecidability is the antithesis of generic computability; a set is absolutely undecidable if it cannot be computed at all, modulo a set of positions of density $0$.

In Downey, Jockusch and Schupp~\cite{DowneyJockuschSchupp}, the following question is asked. 

\begin{question}
\label{quest:jockusch}
Does every non-zero Turing degree contain a set which is absolutely undecidable?
\end{question}

In other words, does Theorem~\ref{thm:jockush} fail if we weaken bi-immunity to absolute undecidability? We give a positive answer to this question in Theorem~\ref{thm:abs-incomp-coding}. In fact every non-zero \emph{truth-table} degree contains a sequence which is absolutely undecidable. 

Theorems~\ref{thm:jockush} and \ref{thm:abs-incomp-coding} show that an interesting dichotomy exists between bi-immunity and absolute undecidability. In Section~\ref{sect:between}, of this paper we investigate what happens between these two notions. 

To express this idea precisely, we need to introduce some definitions. 
For a function $h\colon\N\rightarrow\R^{>0}$, denote by $o(h)$ the set of functions $f\colon\N \rightarrow \R$ such that $\lim_{n\rightarrow \infty} {f(n)}/{h(n)} = 0$. In particular, write $o(n)$ for $o(g)$ when $g$ is the function $n \mapsto n$.
Given $D \subseteq \N$, define
$\kappa_D\colon n \mapsto |D \cap \{0, \ldots,n-1\}|$ and $\rho_D\colon n \mapsto \kappa_D(n)/n$.
 Theorem~\ref{thm:abs-incomp-coding} is a uniform version of the following statement.
\begin{quote}
Every non-zero truth-table degree contains an element $X$ such that if $\phi$ is a partial computable function and $X$ extends $\phi$ then $\kappa_{\dom(\phi)} \in o(n)$.
\end{quote}

To determine what happens between bi-immunity and absolute undecidability, we can ask whether this statement can be strengthened by replacing  $o(n)$ with a smaller set of functions. In particular, is this statement still true if we replace $o(n)$ with $o(h)$ for some computable  function $h \in o(n)$ e.g.\ $n/\log\log n$?

We will show that the statement cannot be strengthened in this manner.  If $h$ is a computable function and $h(n) \in o(n)$, then there is a non-zero Turing degree $\mathbf{a}$ such that for all sets $X  \le_T \mathbf{a}$, $X$ extends a partial computable function $g$ with $\kappa_{\dom(g)} \not \in o(h)$.   In fact we will construct a single Turing degree that works for all such $h$. This is the content of Theorem~\ref{thm: tight}.
Hence the threshold where the behavior changes between bi-immunity and absolute undecidability occurs right at the absolute undecidability end. 

Theorem~\ref{thm: tight} is proved using forcing with computable perfect trees. The structure of this proof is similar to that of Jockusch's proof of Theorem~\ref{thm:jockush}~\cite{Jockusch}, but requires some extra machinery to deal with density.

\section{Coding Theory and Absolute Undecidability}
Our objective in this section and the following section,  is to answer Question~\ref{quest:jockusch} by proving the following theorem.

\begin{theorem} \label{thm:abs-incomp-coding}
There exists a tt-functional $\Phi$ such that for every non-computable sequence $A \in \cs$, $\Phi^A$ is absolutely undecidable and $\Phi^A \equiv_{tt} A$.
\end{theorem}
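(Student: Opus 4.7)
The plan is to partition $\N$ into consecutive blocks $B_1, B_2, \ldots$ with $|B_k| = 2^k$, and to use block $B_k$ to store the Walsh-Hadamard encoding of the prefix $A \uh k$. Concretely, after fixing a computable identification of the positions inside $B_k$ with vectors $y \in \FF_2^k$, I would set the corresponding bit of $\Phi^A$ to $\langle A \uh k, y \rangle \bmod 2$. Each value $\Phi^A(n)$ then depends only on the $k$ bits of $A$ forming $A\uh k$, at positions computable from $n$ alone; so $\Phi$ is a total tt-functional. The reverse reduction $A \leq_{tt} \Phi^A$ is equally transparent: to read off $A(i)$ it suffices to query the position in any block $B_k$ with $k > i$ corresponding to the standard basis vector $e_i$, since $\langle A \uh k, e_i \rangle = A(i)$.

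The heart of the argument is to show that $B := \Phi^A$ is absolutely undecidable whenever $A$ is non-computable. I would argue by contraposition: assume $\phi$ is a partial computable function with $\phi(n) = B(n)$ on a domain of positive upper density $\delta$, and aim to compute $A$. A direct accounting --- if the relative density of $\dom(\phi)$ inside $B_k$ were eventually below some $\delta'' > 0$, the overall upper density could not reach $\delta$ --- produces an infinite set of \emph{good} indices $k$ for which $|\dom(\phi) \cap B_k| \geq \delta'' \cdot |B_k|$, where $\delta'' > 0$ depends only on $\delta$.

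Next I would exploit the linear-algebraic structure of the Walsh-Hadamard code. Identifying $B_k$ with $\FF_2^k$, the set $S_k := \dom(\phi) \cap B_k$ has $|S_k| \geq \delta'' \cdot 2^k$, so it cannot sit inside a subspace of dimension less than $k - \log_2(1/\delta'')$. Consequently the affine solution set $\tilde L_k := \{x \in \FF_2^k : \langle x, y \rangle = \phi(y) \text{ for all } y \in S_k\}$ has size at most $1/\delta''$ and contains $A \uh k$. To package these bounds, define the co-c.e.\ tree $T$ of strings $\sigma$ such that no convergence of $\phi$ at a position in $B_k$ with $k \leq |\sigma|$ contradicts $\sigma \uh k$. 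Then $[T]$ is a $\Pi^0_1$ class containing $A$, and the fact that $|T \cap 2^k| \leq 1/\delta''$ for infinitely many $k$ forces $[T]$ to have at most $1/\delta''$ members. The classical result that every member of a finite $\Pi^0_1$ class is computable then shows that $A$ is computable, contradicting our hypothesis.

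The main obstacle will be the list-decoding step: controlling the number of messages $x$ consistent with an \emph{adversarially chosen} partial view of the Walsh-Hadamard codeword. The argument works precisely because the code is linear, so the set of consistent messages is an affine subspace whose codimension equals the $\FF_2$-rank of $\dom(\phi) \cap B_k$, and any positive-density subset of $\FF_2^k$ automatically has rank within $\log_2(1/\delta'')$ of full. This structural feature is what makes Walsh-Hadamard the right tool, and it is the step I expect to require the most care to formulate cleanly.
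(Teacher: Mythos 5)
Your proof is correct and follows essentially the same approach as the paper: you use the Walsh--Hadamard encoding of prefixes on exponentially growing blocks, observe that positive upper density forces the restriction to infinitely many blocks to retain a positive fraction of positions (the paper's Lemma~\ref{lem:density}), bound the affine solution set by $1/\delta''$ via a rank/annihilator argument (the paper's Proposition~\ref{prop:checksums}), and conclude via a finite $\Pi^0_1$ class. The only (cosmetic) difference is that the paper explicitly extracts a computable subsequence of good block indices and defines the $\Pi^0_1$ class from those, whereas you define a single co-c.e.\ tree from all of $\phi$'s constraints and use the good levels only to bound the number of paths; both packagings are valid and yield the same conclusion.
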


Let us present the structure of our proof.  We will show how to encode any sequence~$A$ via a tt-reduction $\Phi$ into a sequence~$B$ in such a way that $A$ can be fully recovered (by another tt-reduction) given $B$ on a set of positive upper density. If $A$ is non-computable, then $B=\Phi^A$ is necessarily absolutely undecidable. Otherwise, $B$ would extend a partial computable function $\varphi$ with domain a set of positive upper density and thus $A$ could be computed using~$\varphi$.

Our proof will have an element of non-uniformity. Instead of  reproducing the set $A$ exactly (given $B$ on a set of positive upper density)  we will  produce a computable tree of bounded width having~$A$ as a path. This is sufficient because any path through a computable tree of bounded width is itself computable. Theorem \ref{thm_nonuni_neces} shows that this non-uniformity cannot be avoided, and, in fact, that  even any finite number of decoding procedures do not suffice to decode an arbitrary set $A$. This is shown to hold for an arbitrary coding scheme, not just for the specific scheme $\Phi$ used in the proof of Theorem \ref{thm:abs-incomp-coding}.

The functional $\Phi$ is built upon an error correcting code known as a Walsh-Hadamard code (see, e.g.\ Arora and Barak \cite[Section 19.2.2]{AroraBarak}). An error correcting code maps finite strings to finite strings, thus for our purposes we will have to use the Walsh-Hadamard code iteratively. On input $A$, for all~$n$, the initial segment (or prefix) of $A$ of length~$n$, which we denote by $A \uh n$, is encoded into a string $\sigma_n$. The image of~$A$ under~$\Phi$ will be the sequence $B=\sigma_1\sigma_2\sigma_3\ldots.$

Ideally we would like a computable sequence of functions $\{f_n\}_{n \in \N}$, where $f_n$ is a function from binary strings of length $n$ to binary strings of some length $k_n$ with the following property. For any $\delta >0$, for all sufficiently large $n$, for all strings $\sigma$ of length $n$,  any $\lfloor \delta \cdot k_n\rfloor$ bits of $f(\sigma)$  uniquely determines $\sigma$. In coding theory, such an coding is called an \emph{erasure code}: one encodes a string $\sigma$ into $\tau=f(\sigma)$ in such a way that $\sigma$ can be recovered from any subset $\tau'$ of the bits of $\tau$ of sufficiently large size (think of~$\tau'$ as a version of~$\tau$ where some of the bits are missing, say replaced with `?', but those that are given are correct).  

%
 If there were such a coding, we could attempt to prove Theorem~\ref{thm:abs-incomp-coding} by defining $B$ to be equal to 
\[f_1(A\uh1)f_2(A\uh2)f_4(A\uh4)f_8(A\uh8)\ldots.\]
Now if the sequence $k_n$ has the property that given $B$ on a set of positive upper density $\delta$, then for some constant $c >0$,  for infinitely many $n$, one must have $\lfloor \delta/c \cdot k_n\rfloor$ bits of $f_n(A\uh n)$, then we could  reconstruct $A\uh n$ for these $n$, and hence recover~$A$ itself.
While not all sequences $k_n$ have this property, the sequence defined by letting $k_n =2^n$ does (as we will prove in Lemma~\ref{lem:density}). This is the sequence we will make use of in our proofs.

Unfortunately, requiring that any $\lfloor \delta \cdot k_n\rfloor$ bits of $f(\sigma)$  uniquely determine $\sigma$ as described above, is to much to ask for.\label{pg:list-dec-example}
There is a theoretical limit to recovering an encoded string unambiguously from a small fraction of the bits of its encoding.\footnote{The same problem occurs for any finite alphabet size.} For simplicity let us say that we only want to encode three different strings $\sigma_1, \sigma_2, \sigma_3$ into codewords $\tau_1, \tau_2, \tau_3$. Without loss of generality we can assume that the codewords all have the same length~$m$. For each position $i \leq m$, at least two codewords agree on the bit number $i$. Thus, by the pigeonhole principle, there exist two strings among $\tau_1,\tau_2,\tau_3$ that agree on at least one third of their bits. 
%

This seems to defeat the plan of retrieving the original string from a small fraction of the bits of its encoding, as the simple example above shows that any fraction smaller that $1/3$ is insufficient in general. The solution to this problem is  to use another concept from coding theory: \emph{list decoding}. In the remainder of this section we will review these concepts and explain how they can be used to prove Theorem~\ref{thm:abs-incomp-coding}. However, we appreciate that some readers will not have a background in coding theory, so in the following section we will give a proof of Theorem~\ref{thm:abs-incomp-coding} using basic facts about vector spaces, that assumes no prior knowledge of coding theory.

A \emph{list decodable code} is an error correcting coding scheme such that, if the codeword $z$ for a string~$x$ is known without too many errors, one can produce a small list of strings to which~$x$ belongs. Define the (normalized) Hamming distance between two words of equal length $n$ as $d_H(x,y):=|\{i\mid x(i) \not= y(i)\}|/n$. The \emph{minimum distance} of a coding scheme is the minimum  normalized Hamming distance between two distinct codewords. List decodable codes  ensure that a ball (in Hamming distance) around a potential codeword contains only a small number of actual codewords. This means that a potential codeword can be decoded into a small number of possible original strings. In our case this number will be constant, which will allow us to build a computable tree of bounded width.

The Walsh-Hadamard code, which we define in the next section, maps words of length $n$ to codewords of length $2^n$. In~\cite[Section 19.2.2]{AroraBarak} it is shown that this is a code with minimum distance $1/2$, i.e., any two codewords $x,y$ 
 disagree on at least half of their bits. Assume we have a ``corrupted" version of a codeword that could either be $x$ or $y$, where corrupted means that some bits $b$ have been replaced by $1-b$. Assume that we are guaranteed that the corruption has only happened on at most a quarter of all bits.
Then we can be sure that fewer than half of the bits where $x$ and $y$ disagree have been corrupted, so we can take a majority vote on those bits to decide whether the given corrupted codeword came from~$x$ or~$y$.
 This is completely insufficient for our purposes, of course, since in our construction we will in general only be guaranteed a much smaller fraction $\varepsilon \ll 1/4$ of bits of the codeword. But we can apply the following theorem:

\begin{theorem}[Johnson bound (see, e.g., \cite{AroraBarak})]
If $E: \{0,1\}^n \rightarrow \{0,1\}^m$ is an error correcting code of minimum distance at least $1/2$, then for every $x \in \{0,1\}^m$ and $\delta > 0$, there exist at most $l=1/(2\delta^2)$ elements $y$ of $\{0,1\}^n$ such that $d_H(x,E(y)) \leq 1/2-\delta$.
\end{theorem}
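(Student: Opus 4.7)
The plan is to translate the problem into a geometric one in $\R^m$ via the standard $\pm 1$ encoding and then use a short averaging argument based on the nonnegativity of a sum-of-squares norm. Concretely, I would map each bit $b \in \{0,1\}$ to $(-1)^b \in \{+1,-1\}$, which turns a string $w \in \{0,1\}^m$ into a vector $\tilde{w} \in \{+1,-1\}^m$. The key identity is that for $u, v \in \{0,1\}^m$,
\[
\langle \tilde u, \tilde v\rangle \;=\; m - 2\cdot|\{i : u(i)\neq v(i)\}| \;=\; m\bigl(1 - 2 d_H(u,v)\bigr).
\]
This lets us dictionary-translate both hypotheses into inner product bounds: the minimum distance assumption $d_H(E(y_1), E(y_2)) \geq 1/2$ for distinct $y_1, y_2$ becomes $\langle \widetilde{E(y_1)}, \widetilde{E(y_2)}\rangle \leq 0$, and the closeness assumption $d_H(x, E(y)) \leq 1/2 - \delta$ becomes $\langle \tilde x, \widetilde{E(y)}\rangle \geq 2\delta m$.

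Next, suppose $y_1, \ldots, y_l \in \{0,1\}^n$ are distinct elements with $d_H(x, E(y_i)) \leq 1/2 - \delta$ for all $i$. Write $v = \tilde x$ and $c_i = \widetilde{E(y_i)}$, so $\|v\|^2 = \|c_i\|^2 = m$. I would then expand
\[
0 \;\leq\; \Bigl\| \sum_{i=1}^{l} c_i \Bigr\|^2 \;=\; \sum_{i=1}^{l} \|c_i\|^2 + \sum_{i\neq j} \langle c_i, c_j\rangle \;\leq\; l m,
\]
using the fact that every cross term is $\leq 0$ by the minimum distance hypothesis.

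Finally, I apply Cauchy--Schwarz to the quantity $\langle v, \sum_i c_i\rangle$. On the one hand, by linearity and the closeness hypothesis,
\[
\Bigl\langle v, \sum_{i=1}^{l} c_i\Bigr\rangle \;=\; \sum_{i=1}^{l} \langle v, c_i\rangle \;\geq\; 2\delta m l.
\]
On the other hand, Cauchy--Schwarz combined with the previous bound gives
\[
\Bigl\langle v, \sum_{i=1}^{l} c_i\Bigr\rangle \;\leq\; \|v\| \cdot \Bigl\| \sum_{i=1}^{l} c_i \Bigr\| \;\leq\; \sqrt{m}\cdot\sqrt{lm} \;=\; m\sqrt{l}.
\]
Combining these two inequalities yields $2\delta m l \leq m \sqrt{l}$, hence $l \leq 1/(4\delta^2) \leq 1/(2\delta^2)$, as claimed. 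The only subtle step is recognizing that the two inequalities come from opposite directions (a lower bound from the positive correlations $\langle v, c_i\rangle$ and an upper bound from the non-positive correlations $\langle c_i, c_j\rangle$); everything else is formal manipulation. I do not foresee a real obstacle since the geometric picture essentially writes the proof itself.
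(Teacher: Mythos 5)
Your proof is correct, and in fact it establishes the slightly stronger bound $l \le 1/(4\delta^2)$, which of course implies the stated $1/(2\delta^2)$. Note, however, that the paper does not actually prove this theorem: it cites it to Arora and Barak (indeed, the entire coding-theoretic section is a high-level sketch, and the self-contained argument the paper relies on is the direct linear-algebra analysis of the Walsh--Hadamard code in Proposition~\ref{prop:checksums}, which bypasses the Johnson bound altogether and yields a list of size $2/\delta$ rather than $1/\delta^2$). So there is no paper proof to compare against; what you have written is the standard geometric proof of the Johnson bound for distance-$1/2$ codes, via the $\pm 1$ embedding, nonnegativity of $\lVert\sum_i c_i\rVert^2$, and Cauchy--Schwarz. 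Two small points of hygiene worth making explicit: the cross terms $\langle c_i, c_j\rangle \le 0$ require $E(y_i) \ne E(y_j)$ for distinct $y_i, y_j$, which does follow from the minimum-distance hypothesis (distance $\ge 1/2 > 0$ forces $E$ to be injective); and the final division by $\sqrt{l}$ needs $l \ge 1$, but the case $l = 0$ is vacuous. With those caveats the argument is complete.
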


In other words, if we have a corrupt version $z'$ of a codeword $z=E(y)$, which is good enough in the sense that a fraction $1/2 + \delta$ of the bits of $z'$ coincide with those of~$z$, we can generate from $z'$ a list of size $1/(2\delta^2)$ of possible candidates for~$y$ which will indeed contain~$y$.


In our case, we do not have such a $z'$, but we have instead a known fraction of bits of $z$ which we are sure are correct and we know their position in~$z$. As we explained above, what we need for this situation is an erasure code. 


The following argument shows that we can still use the Walsh-Hadamard code in this context. Imagine we know a fraction $2\delta$ of the bits of a codeword $z$ for the original word~$y$, together with their respective positions in $z$. Now fill all the remaining positions with $0$'s to get a potential codeword $z_0$ and with $1$'s to get a potential codeword~$z_1$. Perform a list-decoding on both $z_0$ and $z_1$, thus getting two lists of possible candidates of size at most $1/(2\delta^2)$. Merge the two lists; the resulting list has size at most $2 \cdot 1/(2\delta^2) = 1/\delta^2$. Note that the correct entry is on the list since either $z_0$ or $z_1$ must have a fraction at least $1/2+\delta$ of its bits in common with~$z$, so the list contains the original word~$y$.

\section{Every truth table degree contains an absolutely undecidable set}

We hope that the reader who is familiar with coding theory can find in the previous section sufficient information to prove Theorem~\ref{thm:abs-incomp-coding}. In this section, we present our argument in a self-contained way that assumes no prior knowledge of coding theory. Besides, the direct analysis of the Walsh-Hadamard code for our purposes will give a better bound on the list of candidates than the one we gave in the previous section ($1/\delta$ instead of $1/\delta^2$).

The core of the proof resides in a combinatorial argument about vector spaces. Before giving the argument let us illustrate it with an example. Define $f_3: \{0,1\}^3\rightarrow \{0,1\}^7$ as follows. If $a,b,c \in \{0,1\}$ then
\[f_3(abc) = abc(a+b)(b+c)(a+c)(a+b+c),\]
where addition is performed modulo $2$ (caveat: the right-hand side is a concatenation of bits, not a product modulo 2). In other words, $f_3(abc)$ is obtained by concatenating together the output from all linear functionals from $\{0,1\}^3$ to $\{0,1\}$ with argument $abc$. (Note that we do not include the zero functional in this example.) Given any $a,b,c \in \{0,1\}$ assume we are given 3 bits of $f_3(abc)$, e.g.\  the zeroth bit is $1$, the fourth bit is $0$ and sixth bit is~$1$. The zeroth bit tells us that $a=1$, the fourth bit tells us that $b+c=0$, the sixth bit is redundant information because we already know that $a+b+c=1$ from the zeroth and fourth bit. 
Nevertheless, we can determine that $abc \in \{100, 111\}$ i.e.\ we know that $abc$ is one of two possibilities. Further, no matter which $3$ bits we are given, we are always guaranteed at least two `independent' pieces of information and hence if we cannot determine $abc$, we can always show that there are only two possible values it could take. 

 What is remarkable is how this example scales. If we define $f_n$ in a similar manner for all~$n$, then for any string~$\sigma$ of length~$n$, given $1/4$ of the bits of $f_n(\sigma)$ we can find a set of size at most two that $\sigma$ must belong to. Crucially, the maximum size of the set does not depend on~$n$, but only the fraction of the bits we have access to, in this case $1/4$.

Let $V$ be a vector space of dimension~$n$ over a finite field $\FF$ of cardinality~$q$. Note that in this paper we are only concerned with the case that $q=2$, but we will present the argument in its general form. Denote by $V^*$ the dual space of $V$, i.e., the set of linear functionals from $V$ to $\FF$. In the example above, we were given the values of some bits of $f_n(a,b,c)$. Each bit told us two things, the position of the bit told us the linear functional used, and the value of the bit told us the value of the linear functional on the argument $abc$.  We can represent this information as a subset $S$ of pairs in $(\varphi, z) \in V^* \times \FF$. 
For any $x \in V$, let $C_x=\{(\phi,\phi(x)) \in V^* \times \FF \mid \phi \in V^*\}$.  The set $C_x$ simply records the values $x$ takes on each linear functional in $V^*$. Now if $S\subseteq C_x$, then for all $(\phi, z) \in S$,  we have that $\phi(x)=z$ i.e.\ the values that $x$ takes on the linear functionals in $V^*$ is consistent with the information provided by $S$.


\begin{proposition}\label{prop:checksums} 
Let $V$, $n$, $\FF$, $q$, $V^*$ and $C_x$ be defined as above.
For any finite set $S$ of pairs $(\phi,z) \in V^* \times \FF$, the set $\A_S=\{x \mid S \subseteq C_x\}$  has  cardinality at most $q^n/|S|$.
\end{proposition}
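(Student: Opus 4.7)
The plan is to analyze $\A_S$ directly as the solution set of a system of linear equations and exploit the elementary fact that any such solution set, if nonempty, is an affine subspace whose codimension is controlled by the dimension of the span of the functionals involved.

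First I would dispose of the trivial case $\A_S=\emptyset$, for which the bound $|\A_S| \leq q^n/|S|$ is automatic. So assume there exists $x_0 \in \A_S$. A key preliminary observation is that if $S$ contains two pairs $(\phi,z_1)$ and $(\phi,z_2)$ with $z_1 \neq z_2$, then no $x$ can satisfy $\phi(x)=z_1$ and $\phi(x)=z_2$ simultaneously, forcing $\A_S=\emptyset$ contrary to our assumption. Therefore each functional appears in $S$ with at most one value, and we can write $S=\{(\phi,z_\phi) : \phi \in T\}$ for some $T \subseteq V^*$ with $|T|=|S|$.

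Next I would describe $\A_S$ explicitly as the set of $x \in V$ satisfying the linear system $\phi(x)=z_\phi$ for all $\phi \in T$. Let $W=\mathrm{span}(T) \subseteq V^*$ and $r=\dim W$. Since the system is consistent (witnessed by $x_0$), the consistency extends linearly to all of $W$, so $\A_S$ coincides with the solution set of the subsystem indexed by any basis of $W$. This is an affine subspace of $V$ of codimension $r$, hence has cardinality $q^{n-r}$.

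Finally, since $T \subseteq W$ and $|W|=q^r$, we have $|S|=|T| \leq q^r$, so
\[|\A_S| \cdot |S| \;\leq\; q^{n-r} \cdot q^r \;=\; q^n,\]
which is the desired bound. The only step requiring any care is the codimension count in the third paragraph: one has to verify that the effective number of independent constraints in $S$ equals $\dim W$ rather than some larger number coming from redundant equations. This is immediate once consistency has been established, but it is where the proof genuinely uses linearity, and it is the hinge of the whole argument.
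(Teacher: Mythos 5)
Your proof is correct and follows essentially the same route as the paper's: dispose of the empty case, fix $x_0 \in \A_S$, observe that each functional appears at most once, pass to the span $W$ of the functionals in $S$, identify $\A_S$ as a coset of the common kernel (of dimension $n-\dim W$), and bound $|S| \le |W| = q^{\dim W}$. The paper phrases the coset step in terms of the annihilator $H^\circ$ of $H = W$ and the dimension formula $\dim H^\circ = n - \dim H$, but this is the same codimension count you perform.
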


\begin{proof}
Fix a finite set $S$ of pairs $(\phi,z) \in V^* \times \FF$. If $\A_S = \emptyset$ then the result holds trivially so let us assume that $\A_s$ is not empty and fix some element $x_0 \in \A_S$.
If some $\psi$ appears in two different pairs $(\psi,z_1)$ and $(\psi,z_2)$ of $S$, then it is clear that $\A_S=\emptyset$; so we may assume that any $\phi$ appears at most once in the pairs of $S$. Call the domain of $S$ (written $\dom(S)$) those $\phi$ that are the first coordinate of some element of $S$ and let $H$ be the subspace of $V^*$ generated by $\dom(S)$. Since $|H| \geq |S|$ we have $\dim(H) =  \log_q(|H|)  \geq \log_q(|S|)$, where the equality is due to the linear independence of vectors in a basis.

If $y \in \A_S$, this means that for all $\phi \in \dom(S)$, $\phi(x_0)=\phi(y)$, and therefore by linearity, that $\phi(x_0)=\phi(y)$ for all $\phi \in H$, which again by linearity can be re-written as $\phi(x_0-y)=0$ for all $\phi \in H$. In other words, $x_0-y$ belongs to the annihilator $H^\circ$ of $H$. 
Of course, the converse holds, i.e.\ if $x_0-y$ belongs to $H^\circ$, then $\phi(x_0)=\phi(y)$ for all $\phi \in \dom(S)$ and therefore $y \in \A_S$. This shows that $\A_S$, provided it contains at least one element $x_0$, is the affine space $x_0 + H^\circ$ and therefore has the same dimension as $H^\circ$. Now, using the classical expression of the dimension of the annihilator, $\dim(\A_S)=\dim(H^\circ)=\dim(V^*)-\dim(H) = n - \dim(H) \leq n - \log_q(|S|)$. This implies that $|\A_S | \le q^n/|S|$.
\end{proof}

We will also need the following two  easy lemmas about density.

\begin{lemma} \label{lem:density}
For all integers~$n$ let $I_n=\{2^n,\ldots,2^{n+1}-1\}$ (note that the $I_n$ form a partition of $\N \setminus\{0\}$). If a set $D \subseteq \N$ has positive upper density greater than $\delta$, then for infinitely many~$n$, the density of $D$ inside $I_n$ (that is, the quantity $|D \cap I_n|/|I_n|$) is at least $\delta/2$.
\end{lemma}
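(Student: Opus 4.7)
The plan is to prove the contrapositive: assume that the local density $|D \cap I_n|/|I_n|$ is at least $\delta/2$ only for finitely many $n$, and deduce $\rho(D) \leq \delta$, contradicting the hypothesis $\rho(D) > \delta$. So fix $N$ such that for every $n \geq N$ we have $|D \cap I_n| < (\delta/2) \cdot 2^n$.

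The first step is a straightforward geometric sum estimate on the prefixes whose length is a power of~$2$. Since the $I_n$ partition $\N \setminus \{0\}$, for every $m \geq N$ I would split
\[
|D \cap \{0, \ldots, 2^{m+1}-1\}| \;=\; |D \cap \{0, \ldots, 2^N - 1\}| \;+\; \sum_{n=N}^{m} |D \cap I_n|,
\]
bound the first term trivially by $2^N$ and the second by $(\delta/2) \sum_{n=N}^{m} 2^n < (\delta/2)\cdot 2^{m+1}$, obtaining the clean bound $|D \cap \{0, \ldots, 2^{m+1}-1\}| < 2^N + (\delta/2)\cdot 2^{m+1}$.

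The second step is to transfer this estimate from powers of~$2$ to arbitrary integers~$k$, which is the only point where one must be a little careful. Given $k \geq 2^N$, let $m$ be the unique integer with $2^m \leq k < 2^{m+1}$. Then
\[
\frac{|D \cap \{0, \ldots, k-1\}|}{k} \;\leq\; \frac{|D \cap \{0, \ldots, 2^{m+1}-1\}|}{2^m} \;\leq\; \frac{2^N}{2^m} + \delta,
\]
using the bound from the previous step in the numerator and the factor $2$ loss $2^{m+1}/2^m = 2$ in the denominator. Letting $k \to \infty$ forces $m \to \infty$, so the limsup of the left-hand side is at most $\delta$, i.e.\ $\rho(D) \leq \delta$, the desired contradiction.

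No real obstacle here, but it is worth remarking that the numerical threshold $\delta/2$ in the lemma (rather than any other fraction of $\delta$) is precisely what is needed to absorb the factor of $2$ gap between $2^m$ and $2^{m+1}$ in the final step: with a weaker local density hypothesis, say $|D \cap I_n|/|I_n| < \delta$, the same argument would only yield $\rho(D) \leq 2\delta$, which would not contradict $\rho(D) > \delta$.
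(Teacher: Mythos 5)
Your proof is correct and follows essentially the same route as the paper: decompose $\{0,\ldots,k-1\}$ into the dyadic blocks $I_n$, bound the contribution of each block by $(\delta/2)\cdot 2^n$, sum the geometric series, and absorb the factor-of-$2$ gap between $2^m$ and $k$ into the $\delta/2$ versus $\delta$ slack. The only cosmetic difference is that the paper discards finitely many elements of $D$ up front (which does not affect upper density) so the bound $|D\cap I_n|<(\delta/2)2^n$ holds for all $n$, avoiding the explicit $2^N/2^m$ error term that you carry along and let vanish in the limit.
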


\begin{proof}
Suppose for the sake of contradiction that $|D \cap I_n|/|I_n| < \delta/2$ for almost all~$n$. Without loss of generality, we may assume that this even holds for all~$n$ (by removing finitely many elements from~$D$, which does not change the upper density). For any given~$k$, let $n=n(k)$ be such that $k \in I_n=\{2^n,\ldots,2^{n+1}-1\}$. Then
\[
|D \cap \{0,\ldots,k\}| \leq  \sum_{j \leq n} |D \cap I_j| \leq \sum_{j \leq n} (\delta/2) \cdot 2^j \leq (\delta/2) \cdot 2^{n+1}  \leq (\delta/2) \cdot 2k \leq \delta k.
\]
This contradicts the fact that the upper density of~$D$ is greater than~$\delta$.
\end{proof}

%
\begin{lemma}
%
\label{lem: removal}
If $D \subseteq \N$ has upper density $\delta$ and $E \subseteq \N$ has density $0$ then
$D \setminus E$ has upper density $\delta$.
\end{lemma}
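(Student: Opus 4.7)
The plan is to prove both inequalities $\rho(D \setminus E) \leq \delta$ and $\rho(D \setminus E) \geq \delta$ separately.

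The upper bound is immediate: since $D \setminus E \subseteq D$, we have $|(D\setminus E) \cap \{0,\dots,n-1\}| \leq |D \cap \{0,\dots,n-1\}|$ for every $n$, and taking $\limsup$ yields $\rho(D \setminus E) \leq \rho(D) = \delta$.

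For the lower bound, the key observation is that $E$ having upper density $0$ means $\limsup_n |E \cap \{0,\dots,n-1\}|/n = 0$, which (since density is non-negative) is equivalent to $\lim_n |E \cap \{0,\dots,n-1\}|/n = 0$. From the decomposition $D = (D\setminus E) \sqcup (D \cap E)$ we obtain, for every $n$,
\[
\frac{|(D \setminus E) \cap \{0,\dots,n-1\}|}{n} \;\geq\; \frac{|D \cap \{0,\dots,n-1\}|}{n} - \frac{|E \cap \{0,\dots,n-1\}|}{n}.
\]
Now I would take $\limsup$ on both sides. The point is that since the subtracted term converges (to $0$), the standard fact $\limsup_n (a_n - b_n) = \limsup_n a_n - \lim_n b_n$ applies, giving $\rho(D \setminus E) \geq \delta - 0 = \delta$.

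Combining both bounds gives $\rho(D \setminus E) = \delta$. There is no real obstacle here — the only subtle point is recognizing that because $E$ has density $0$ the sequence $|E \cap \{0,\dots,n-1\}|/n$ genuinely converges (not merely has $\limsup = 0$), which is what lets us move the $\limsup$ past the subtraction cleanly.
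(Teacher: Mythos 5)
Your proof is correct and follows essentially the same approach as the paper's: both start from the inequality $|(D \setminus E) \cap \{0,\dots,n-1\}| \geq |D \cap \{0,\dots,n-1\}| - |E \cap \{0,\dots,n-1\}|$ and use the fact that the subtracted term is $o(n)$ to conclude the $\limsup$s coincide. You simply spell out the trivial upper bound $\rho(D \setminus E) \leq \rho(D)$ and the $\limsup$-subtraction rule explicitly, while the paper leaves these implicit.
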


\begin{proof}
For all $n$, $|(D \setminus E) \cap \{0,\ldots,n-1\}| \geq |D \cap \{0,\ldots,n-1\}| - |E \cap \{0,\ldots,n-1\}| \geq |D \cap \{0,\ldots,n-1\}| - o(n)$. Thus $\limsup_n |(D \setminus E) \cap \{0,\ldots,n-1\}|/n = \limsup_n |D \cap \{0,\ldots,n-1\}|/n$
\end{proof}

We are now ready to prove Theorem~\ref{thm:abs-incomp-coding}.

\begin{proof}[Proof of Theorem~\ref{thm:abs-incomp-coding}]
We construct $\Phi$ block by block. On input $A$, for all~$n$, the initial segment $A \uh n$ is mapped to a string $\sigma_n$ of length $2^n$, and the image of $A$ under $\Phi$ is the sequence $B=\sigma_1\sigma_2\sigma_3\ldots.$ For all~$n$, $\sigma_n$ is constructed as follows:
\begin{itemize}
\item Identify $\{0,1\}^n$ with the vector space $V_n$ of dimension~$n$ over $\FF_2$, and let $x$ be the element of $V_n$ corresponding to $A \uh n$.
\item Order the elements of $V_n^*$ in some canonical way (say lexicographically, as $V_n^*$ can also be identified with $\{0,1\}^n$):  $V_n^*=\{\phi_1,\phi_2,\ldots,\phi_{2^n}\}$.
\item Define $\sigma_n$ to be the string $\phi_1(x)\phi_2(x)\ldots\phi_{2^n}(x)$.
\end{itemize}

Let us show that if $B=\sigma_1\sigma_2\sigma_3\ldots$  is not absolutely undecidable, then $A$ is computable. Assuming $B$ is not absolutely undecidable, let $f$ be a partial computable function whose domain $D$ has upper density greater than some rational $\delta>0$ and such that $f(k)=B(k)$ for all~$k \in D$. By Lemma~\ref{lem:density},  there are infinitely many~$n$ such that the density of $D$ in $I_n$ is at least $\delta/2$. Since $D$ is c.e.\ and $\delta$ rational, the set of such~$n$ is c.e.\ and therefore contains a computable set $\{n_0<n_1<n_2<n_3<\ldots\}$. Note that $B \uh I_n = \sigma_n$, so by Lemma \ref{lem:density}, for all~ $n_i$, we can compute a fraction $\delta/2$ of the values of the bits of $\sigma_{n_i}$ (taking the values of $f$ on elements in $I_{n_i}$). By the construction of $\sigma_{n_i}$, this means that we can compute a subset of $\{(\phi,\phi(x)) \mid \phi \in V^*_{n_i}\}$ (where $x$ is the element of $V_{n_i}$ corresponding to $A \uh {n_i}$) of size at least $(\delta/2)\cdot 2^{n_i}$, which, by Proposition~\ref{prop:checksums}, allows us to compute a finite set $\A_i$ of size at most $\frac{2^{n_i}}{(\delta/2)2^{n_i}}=2/\delta$ containing $A \uh n_i$. Thus, $A$ belongs to the $\Pi^0_1$ class
\[
\{X \in \cs \mid \forall i \; (X \uh n_i) \in \A_i\}.
\]
This $\Pi^0_1$ class contains at most $2/\delta$ elements as all $\A_i$ have size at most $2/\delta$. Thus all elements of this $\Pi^0_1$ class are computable, and therefore $A$ is computable.

It is clear that $A\leq_{tt}\Phi(A)$, as we only need the first $2^{n+1}-1$ bits of $\Phi(A)$ to recover~$A\uh n$.
\end{proof}

The proof of Theorem~\ref{thm:abs-incomp-coding} constructs a single functional $\Phi$ that uniformly encodes any sequence $A \in 2^{\omega}$ to $\Phi^A$. On the other hand, the reader might notice that the decoding procedure given for recovering $A$ given $\Phi^A$ on a set of positive upper density is not uniform. Indeed, finding a path in a computable tree which only has finitely many paths cannot be done uniformly in general. And even with a known bound on the number of paths, one cannot effectively compute a finite list of sequences which contains all paths. We shall formally prove that this cannot be avoided, not only for the Walsh-Hadamard coding we use, but for any other coding scheme as well. By analogy to the terminology used above, the next theorem can be informally stated as follows: infinitary erasure codes with finite list decoding do not exist. 

In this section we will freely identify $\cs$ with the powerset of $\N$. Fix a $tt$-functional $\Gamma$. A Turing functional $\Psi$ correctly decodes $X$ on $D$, if $\Psi$ can compute $X$ given access to the bits of $\Gamma^X$ in $D$, along with $D$ itself, i.e.\ if $X = \Psi(D \oplus (\Gamma^X \cap D))$. The following theorem shows that for a fixed $\delta$, there does not exist a $tt$-functional $\Gamma$, together with a finite number of functionals $\Psi_1,\ldots,\Psi_k$ such that for any~$X$, and any $D$ such that $\rho(D) \ge \delta$,  $X$ belongs to the set $\{\Psi_i(D\oplus(\Gamma^X \cap D)) \mid i \leq k\}$.

\begin{theorem}\label{thm_nonuni_neces}
Fix $k$. Let $\Gamma$ be a  $tt$-functional, 
 $\Psi_1, \Psi_2, \ldots, \Psi_k$ a list of Turing functionals and $\sigma$, $\tau$ finite strings. There exist computable $X$ and $D$ with $\sigma \preceq X$ and $\tau \preceq D$ such that:
 \begin{enumerate}[(i)]
 \item $\rho(D) \ge 1/3$.
 \item $X \not \in \{\Psi_i(D\oplus(\Gamma^X \cap D)) \mid i \leq k\}$.
 \end{enumerate}
\end{theorem}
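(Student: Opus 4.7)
The plan is a Kleene--Post-style finite extension argument. I build computable $X, D$ through nested finite commitments $(\sigma_s, \tau_s)$ of common length $L_s$ for $s = 0, 1, \ldots, k$. I initialize by padding $\sigma, \tau$ to a common length $L_0$ and adding to $\tau_0$'s support as needed so that $|\tau_0^{-1}(1)| \geq L_0/3$. After stage $k$, the tail extends $X$ by zeros and $D$ by the positions $\{j \geq L_k : j \equiv 0 \pmod 3\}$, producing computable $X \succeq \sigma$ and $D \succeq \tau$ with $\rho(D) \geq 1/3$.

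At stage $s \in \{1, \ldots, k\}$, I diagonalize against $\Psi_s$ as follows. I first search for an extension $(\sigma_s, \tau_s)$ of some length $L_s$ with $|\tau_s^{-1}(1)| \geq L_s/3$ and a position $n_s \in [L_{s-1}, L_s)$ such that $\Psi_s^{\tau_s \oplus (\Gamma^{\sigma_s} \cap \tau_s)}(n_s) \downarrow = v$ with use at most $2L_s$, while $\sigma_s(n_s) = 1 - v$. By monotonicity of Turing reductions, this halting value $v$ is preserved under every further oracle extension consistent with the commitment, so the final $X, D$ satisfy $\Psi_s^{D \oplus (\Gamma^X \cap D)}(n_s) = v \neq X(n_s)$. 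If no such defeating configuration exists, I instead commit to an extension on which $\Psi_s^?(n_s)$ must diverge on every consistent further extension at some target $n_s$, which likewise suffices.

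The main obstacle is verifying that at every stage one of the above scenarios is achievable. Were both to fail simultaneously at stage $s$, $\Psi_s$ would be a universal correct single-value decoder over all density-$\geq 1/3$ erasure patterns extending the commitment. I refute this by exhibiting two computable extensions $X^{(0)} \neq X^{(1)}$ of $\sigma_{s-1}$ and a common computable extension $D$ of $\tau_{s-1}^{-1}(1)$, with $\rho(D) \geq 1/3$, such that $\Gamma^{X^{(0)}} \cap D = \Gamma^{X^{(1)}} \cap D$; then $\Psi_s$ receives identical oracles on the two configurations and must fail on at least one of them, providing the required defeat.

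The crucial technical step is constructing such $X^{(0)}, X^{(1)}, D$ computably while preserving density. The idea is to let $X^{(0)}, X^{(1)}$ differ at a block of carefully placed positions so that the induced changes in $\Gamma^X$ cancel on the set $D$, and then to pick $D$ inside the complement of the (controllable) disagreement set. This is enabled by the totality and computability of the tt-functional's use function $\gamma$: for any target position $n$, the set $\{j : \gamma(j) \leq n\}$ is finite and effectively enumerable, which allows us to choose the differing positions of $X^{(0)}, X^{(1)}$ and to arrange $D$ so that the disagreement set has upper density strictly less than $2/3$, so that its complement computably supports a density-$1/3$ set. Iterating this through stages $s = 1, \ldots, k$ yields the required computable $X$ and $D$.
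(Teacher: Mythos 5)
The overall Kleene--Post architecture is reasonable and resembles the paper's induction, but there is a genuine gap in what you call the ``crucial technical step.''

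Your plan requires, at each stage, producing two extensions $X^{(0)} \neq X^{(1)}$ of the current commitment such that the positions where $\Gamma^{X^{(0)}}$ and $\Gamma^{X^{(1)}}$ disagree have upper density strictly below $2/3$. Your justification appeals to the computability of the tt-use $\gamma$: you want to flip a ``block of carefully placed positions'' so the induced changes ``cancel'' on $D$. This does not work for an arbitrary tt-functional. If you flip $X$ at a position $m$ beyond the commitment, the set of output coordinates $j$ that depend on position $m$ is $\{j : m \in \text{use}(j)\}$, and for an unbounded, monotone use this is typically \emph{cofinite}, so the disagreement set has density $1$. The set $\{j : \gamma(j) \leq n\}$ that you invoke as finite is exactly the set that is \emph{unaffected} by a change beyond $n$, so it is the agreement set that is small, not the disagreement set -- the logic is backwards. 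And ``cancellation'' of effects on $\Gamma^X$ requires structural knowledge of $\Gamma$ (e.g., linearity) that is simply unavailable for an arbitrary tt-functional. With only two sequences, there is nothing stopping $\Gamma$ from making their images disagree on a set of upper density arbitrarily close to $1$.

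The fact that a suitable fooling pair exists is true, but the proof of that fact needs the three-sequence pigeonhole argument the paper uses (and which is already developed on the page you would have cited if following the paper's own discussion at page~\pageref{pg:list-dec-example}): take three distinct computable extensions $X_1, X_2, X_3$ of $\sigma'$; for every length $n$, the three images $Y_i = \Gamma^{X_i}$ restricted to $n$ must contain a pair agreeing on at least $1/3$ of the first $n$ bits, and therefore some fixed pair does so for infinitely many $n$, giving an agreement set $D$ of upper density $\geq 1/3$. No two-sequence, use-function-based argument can replace this; the pigeonhole over three codewords is the essential combinatorial input. A secondary issue is that your dichotomy between ``force a wrong value'' and ``commit to an extension on which $\Psi_s$ must diverge'' is not a genuine dichotomy -- if no finite extension forces a wrong convergence, it does not follow that some finite extension forces divergence on a target coordinate; $\Psi_s$ might converge correctly everywhere. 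The paper sidesteps this cleanly: once the fooling pair is in hand, either $X_1$ already escapes all $k{+}1$ decoders and you are done, or $\Psi_{k+1}$ outputs $X_1$ on the shared oracle, in which case you commit far enough into $X_2$ and $D$ that $\Psi_{k+1}$ is already wrong (it must output $X_1 \uh (m{+}1) \neq X_2 \uh (m{+}1)$) and you recurse on the remaining $k$ functionals. That recursion, rather than an up-front convergence/divergence case split, is what makes the argument close.
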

\begin{proof}
The proof proceeds by induction on $k$. For $k=0$, there is nothing to prove. 
For $k+1$, let $\Gamma$ be a  $tt$-functional, 
 $\Psi_1, \Psi_2, \ldots, \Psi_{k+1}$ a list of Turing functionals and $\sigma$, $\tau$ finite strings.
 Find a finite string $\sigma' \succeq \sigma$ such that $|\Gamma^{\sigma'}| \ge |\tau|$. 
 Let $X_1,X_2,X_3$ be three distinct computable infinite binary sequences extending $\sigma'$ and let $Y_i=\Gamma^{X_i}$ for~$i \leq 3$. By the discussion of page~\pageref{pg:list-dec-example}, for all~$n$, there are $1 \leq i < j \leq 3$ such that $Y_i \uh n$ and $Y_j \uh n$ coincide on at least one third of their bits. Thus there are $1 \leq i < j \leq 3$ such that $Y_i \uh n$ and $Y_j \uh n$ coincide on at least one third of their bits for infinitely many~$n$. Without loss of generality, assume this holds for the pair $(Y_1,Y_2)$. The set $D$ of positions~$n$ such that $Y_1(n)=Y_2(n)$ has upper density at least~$1/3$. Further as $Y_1$ and $Y_2$ must agree on the first $|\tau|$ bits,  we can adjust $D$ so that $\tau \preceq D$ without affecting the upper density of $D$ or the fact that for all $n \in D$, $Y_1(n)=Y_2(n)$. 
 
If $X_1 \not \in  \{\Psi_i(D\oplus(Y_1 \cap D)) \mid i \leq k+1\}$, then $X_1$ and $D$ witness that the theorem holds for $\Gamma$,  $\Psi_1, \Psi_2, \ldots, \Psi_{k+1}$,  $\sigma$ and $\tau$. 
Otherwise, we can assume without loss of generality that $X_1 = \Psi_{k+1}(D\oplus(Y_1 \cap D))$. Observe that there is some $m$ such that $X_1(m) \ne X_2(m)$. Further as $Y_1 \cap D = Y_2 \cap D$, and 
$\Psi_{k+1}(D\oplus(Y_2 \cap D)) = X_1$, there exists 
$\widehat{\tau} \preceq D$ and $\widehat{\sigma} \preceq X_2$ such that $|\widehat{\sigma}| \ge m+1$, and 
\[\Psi_{k+1}(\widehat{\tau} \oplus (\Gamma^{\widehat{\sigma}} \cap \widehat{\tau})) \succeq X_1 \uh (m+1).\]
By our induction hypothesis for $\Gamma$,   $\Psi_1, \Psi_2, \ldots, \Psi_k$, $\widehat{\sigma}$ and $\widehat{\tau}$, there is $\widehat{X}$, $\widehat{D}$ such that  $\widehat{\sigma} \preceq \widehat{X}$,    $\widehat{\tau} \preceq \widehat{D}$, and 
$\widehat{X} \not \in  \{\Psi_i(\widehat{D}\oplus(\Gamma^{\widehat{X}} \cap \widehat{D})) \mid i \leq k\}$. Observe that
$\Psi_{k+1} (\widehat{D}\oplus(\Gamma^{\widehat{X}} \cap \widehat{D})) \succeq X_1\uh (m+1) \ne \widehat{X} \uh (m+1)$ so 
in this case $\widehat{X}$ and $\widehat{D}$ witness that the theorem holds.
\end{proof}

\section{Between bi-immunity and absolute undecidability}
\label{sect:between}

Theorem~\ref{thm:abs-incomp-coding} shows that in every non-trivial tt-degree (and thus Turing degree) there is an absolutely undecidable set. In other words, in every such degree there is a set such that we are unable to infinitely often correctly guess a constant percentage of its bits. We show that the statement of the theorem is tight. Indeed, Theorem~\ref{thm: tight} below shows that there exists a sequence $X$ such that every $Y$ Turing below~$X$ is ``close" to not being absolutely undecidable.

 In the rest of this paper, $(\phi_e)_e$ and $(\Phi_e)_e$ denote respectively a standard enumeration of partial computable functions from $\N$ to $\N$ and a standard enumeration of partial Turing functionals from $\cs$ to $\cs$. We say that a Turing degree $d$ is \emph{computably dominated} if for any function $f \le_T d$, there is a computable function $g$ such that for all $n$, $g(n) \ge f(n)$. 


\begin{theorem}
\label{thm: tight}
There is a non-computable set $X$ such that for all $Y \le_T X$, and any computable function  $h \in o(n)$, there exists a partial computable function $\varphi$ such that:
\begin{enumerate}
\item $Y$ extends $\phi$ (i.e., $Y(n)=\phi(n)$ for all~$n \in \dom(\phi)$).
\item $\kappa_{\dom(\phi)} \not \in o(h)$.
\end{enumerate}
\end{theorem}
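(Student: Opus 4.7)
The plan is to build $X$ via forcing with computable perfect binary trees, following Jockusch's proof of Theorem~\ref{thm:jockush} but with additional bookkeeping to control density. The construction produces a nested sequence $T_0 \supseteq T_1 \supseteq \cdots$ of computable perfect trees together with increasing strings $\sigma_s \in T_s$ of lengths tending to infinity, and we set $X = \bigcup_s \sigma_s \in \bigcap_s [T_s]$. Each stage addresses one of two requirement types: $(R_e)$ $X \neq \phi_e$, ensuring non-computability; and $(S_{e,i})$, stating that if $\phi_i$ is a total computable function in $o(n)$ and $\Phi_e^X$ is total, then $\Phi_e^X$ extends a partial computable $\varphi$ with $\kappa_{\dom(\varphi)} \notin o(\phi_i)$. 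Requirement $R_e$ is handled by the standard splitting: since $T_s$ is perfect, pick two incomparable extensions of $\sigma_s$ in $T_s$ that branch at some level $m$; at least one disagrees with $\phi_e(m)$ when $\phi_e(m)$ is defined, and we restrict $T_s$ to the subtree above that extension.

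The heart of the argument will be a splitting lemma for $(S_{e,i})$: given a computable perfect tree $T$, a node $\sigma \in T$, a functional $\Phi_e$, and a computable $h \in o(n)$, one can compute a computable perfect subtree $T' \subseteq T$ with $\sigma \in T'$, together with a partial computable $\varphi$, such that either (a) $\Phi_e^Z$ is partial for every $Z \in [T']$, or (b) $\Phi_e^Z$ extends $\varphi$ for every $Z \in [T']$ and $\kappa_{\dom(\varphi)} \notin o(h)$. First, by the classical Spector reduction, if (a) fails we thin $T$ to a computable perfect subtree $T''$ on which $\Phi_e^Z$ is total for every $Z \in [T'']$. On such a subtree, define $\varphi_{T''}(p) = v$ iff there is a level $L$ with $\Phi_e^\tau(p)\!\downarrow = v$ for every $\tau$ in $T''$ at level $L$; this is partial computable (by c.e.\ search over $L$), and by compactness $\Phi_e^Z$ extends $\varphi_{T''}$ for every $Z \in [T'']$.

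The next step is to thin $T''$ further to secure $\kappa_{\dom(\varphi)} \notin o(h)$. Working through a computable embedding $j\colon 2^{<\omega} \to T''$ that parameterizes the current tree, at substage $k$ we pick $n_k$ very large and commit $h(n_k)$ positions $p < n_k$ to $\varphi$ one at a time: for each chosen $p$, pick a splitting level of $j$ much larger than $h(n_k)$, take the majority $\Phi_e$-value at $p$ over the nodes of $j$ at that level, restrict $j$ to the branches giving that value, and record $(p, v_p)$ in $\varphi$. Since $h(n_k) = o(n_k)$ and $2^{<\omega}$ provides $2^L$ available branches at arbitrarily high splitting levels, the $h(n_k)$ successive majority restrictions leave enough branching structure to extract a new computable perfect sub-embedding. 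Combining the substagewise thinnings by a diagonal construction — where substage $k$ modifies the embedding only above a level larger than the commitments of substage $k-1$ — yields a single computable perfect $T'$ satisfying (b) with $\kappa_{\dom(\varphi)}(n_k) \ge h(n_k)$ for all $k$. The main obstacle will be precisely this last step: combining infinitely many pigeonhole thinnings without breaking perfectness of a single computable subtree. The $2^{<\omega}$-embedding viewpoint makes the perfectness-preservation tractable, and it is precisely where the hypothesis $h \in o(n)$ gets used, to guarantee that enough branching slack survives each restriction.
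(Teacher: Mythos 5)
Your overall framing (forcing with computable perfect trees, Spector reduction so that $\Phi_e$ may be taken to be total on $[T'']$, building a partial $\varphi$ by looking at agreement across levels) matches the paper, and your identification of the obstacle at the end — combining infinitely many thinnings into one computable perfect subtree — is exactly the crux. But the mechanism you propose for the thinnings, global majority voting at a high splitting level, does not resolve that obstacle and in fact cannot.

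Here is the problem concretely. Suppose that at substage $k$ the embedding $j$ is supposed to be fixed up to some level $\ell_{k-1}$, so that the $2^{\ell_{k-1}}$ nodes at that level must all survive into $T'$. Take an adversarial $\Phi_e$ with small use, say $\Phi_e^X(p)$ equal to some fixed bit $X(c)$ of the oracle with $c < \ell_{k-1}$, for all $p$. At a splitting level $L$ the $2^L$ nodes split evenly into two halves according to the value of the relevant oracle bit, and the majority vote (ties broken arbitrarily) restricts to one half. But that half lies entirely above the nodes at level $\ell_{k-1}$ whose $c$-th oracle bit has the chosen value; the other half of the level-$\ell_{k-1}$ nodes are killed. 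The hypothesis $h\in o(n)$ controls only \emph{how many} votes you take (so that $2^{L-h(n_k)}\geq 2$), not \emph{where} in the tree the surviving branches sit; it therefore provides no control over whether the vote is compatible with the already-committed part of the embedding. The natural repair — taking separate majorities above each of the $2^{\ell_{k-1}}$ committed nodes — fails for the opposite reason: the local majorities can disagree, so no single value can be assigned to $\varphi(p)$ while keeping all committed nodes alive. Either way the invariant ``substage $k$ modifies the embedding only above a level larger than the commitments of substage $k-1$'' cannot be enforced by majority voting, and without it the intersection $\bigcap_k T_k$ need not be perfect or even computable.

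The paper's proof gets around exactly this with a genuinely different device: it never discards current leaves, but instead \emph{extends every current leaf simultaneously}, searching for extensions $\xi_1,\dots,\xi_{2^n}$ of the $2^n$ current leaves and a set $D$ of size $>h(m)$ on which all the $\Phi^{\xi_i}$ agree. Whether such a simultaneous extension exists is governed by a dichotomy on the condition $\star$ (``there is a set $D$ of positive upper density on which any two paths above $\sigma$ almost agree under $\Phi$''). If $\star$ holds above some $\sigma$, the common $D$ directly gives the desired extensions (Lemma \ref{lem: star}, using Lemma \ref{lem: removal}). If $\star$ fails everywhere, one builds the extensions iteratively: given paths $X_1,\dots,X_i$ already agreeing under $\Phi$ on a positive-upper-density set $D_i$, the failure of $\star$ above $\sigma_{i+1}$ yields two paths $Y,Z$ disagreeing on a positive-upper-density subset of $D_i$, and a binary pigeonhole (on values $0,1$) shows that one of $Y,Z$ agrees with $X_1$ on a positive-upper-density subset, which becomes $D_{i+1}$ (Lemma \ref{lem: not star}). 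This is where the real work is, and it is precisely the step your proposal lacks. The pigeonhole you invoke (majority at a level) operates on branches at a finite depth and throws leaves away; the pigeonhole the paper needs operates on a set of positions $D$ and refines $D$ while keeping all $2^n$ current leaves alive.

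So the gap is not a technicality in bookkeeping: the majority-vote step is the wrong primitive, and replacing it requires the $\star$/$\neg\star$ dichotomy (or something equivalent) to argue that one can always extend all current leaves compatibly. Your observation that perfectness-preservation is the main obstacle is correct; the resolution just isn't ``$h\in o(n)$ leaves slack after the votes,'' but rather a density-theoretic argument about when compatible extensions exist.
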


Theorem~\ref{thm: tight} is proved using forcing with computable perfect  trees. A tree $T \subseteq 2^{<\omega}$ is \emph{perfect} if for all $\sigma \in T$ there is some $\tau \succeq \sigma$ such that both
$\tau0$ and $\tau1$ are in $T$. Given a tree $T \subseteq 2^{<\omega}$, define $[T]$ to be the set of paths through $[T]$, i.e.\ $[T] = \{ X \in \cs \mid \forall n	 \, X\uh n \in T\}$. 

We define a partial order $\po$ on the set of  computable perfect trees in $2^{<\omega}$ by  $T_0 \le T_1$ if $[T_0] \subseteq [T_1]$.
For any computable $A \in \cs$, the set
\[D_A = \{ T \in \po \mid (\forall X \in [T])(X \ne A)\}\]
 is dense in $\po$. For every Turing functional $\Phi_e$, the set
\[ E_e=\{ T \in \po \mid ((\forall X \in [T])( \Phi^X_e \mbox{ is not total})) \vee ((\forall X \in [T])( \Phi^X_e \mbox{ is total}))\}\]
is also dense in $\po$. Hence if  $\{T_i\}_{i \in \N}$ is a descending sequence in $\po$  that meets all such dense sets, then any element of $\bigcap_{i \in \N} [T_i]$ is a non-computable set of computably dominated degree. Further such a sequence is computable by $\emptyset''$.

Since a Turing reduction to a set of computably dominated degree is always equivalent to a truth-table reduction it is enough to continue working with the latter class of reductions in the remainder of the proof.

In order to prove Theorem~\ref{thm: tight} we will show that if $h \in o(n)$ is a computable function and~$\Phi$ is a truth-table functional, then the set
\begin{equation}
\label{eq: dense}\{T \in \po \mid (\exists e)(\forall X \in [T])
(\Phi^X \mbox{ extends }\varphi_e \wedge \kappa_{\dom(\varphi_e)} \not \in o(h)\}
\end{equation}
is dense in $\po$.
%
%
%

%

Fix a computable perfect tree $T$, a truth-table functional $\Phi$ and a computable function~$h$ such that $h \in o(n)$. For all $\sigma \in T$ we will define two partial computable functions $f_\sigma : 2^{< \omega} \rightarrow T$ and $g_\sigma: \N \rightarrow \{0,1\}$ such that if $f_\sigma$ is total then the downward closure of the range of $f_\sigma$ is  a computable perfect subtree of $T$.  Further if $f_\sigma$ is total then  $g_\sigma$ will witness that the range of $f_\sigma$ is in \eqref{eq: dense}.

First let $f_\sigma(\lambda) = \sigma$ (where $\lambda$ is the empty string). Now assume that $f_\sigma$ has been defined on all strings of length less than or equal to $n$ and let
$\{\sigma_1, \ldots, \sigma_{2^n}\} = \{f_\sigma(\tau) \mid |\tau| = n\}$.
We search  the tree $T$ for some $m > n$, a set of nodes $\{\xi_1, \ldots, \xi_{2^n}\} \subseteq T$ and $D\subseteq \{0, \ldots, m-1\}$ such that:
\begin{enumerate}[(i)]
\item $\forall i \in \{1, \ldots, 2^n\}$, $\xi_i \succeq \sigma_i$.
\item $\forall x \in D$, $\Phi^{\xi_1}(x) = \Phi^{\xi_2}(x) = \ldots =  \Phi^{\xi_{2^n}}(x)$.
\item $|D| > h(m)$.
\end{enumerate}

If the search succeeds, then for all $x \in D$ we define $g_\sigma(x)$ to be the common value (i.e.\ $g_\sigma(x) = \Phi^{\xi_1}(x)$). We set
$f(\sigma_i 0)$ to be  one immediate successor of $\xi_i$ in $T$ and
$f(\sigma_i 1)$ to be the other immediate successor of $\xi_i$ in $T$.

We claim that there exists some $\sigma \in T$ such that the function $f_\sigma$ is total. To prove this, we will define a condition $\star$ such that if $\star$ holds for $\sigma$ then $f_\sigma$ is total. Further if $\star$ does not hold for any $\sigma$ then $f_\lambda$ is total (and though we will not need this, in fact $f_\sigma$ will be total for any $\sigma \in T$).

\begin{quote}
Condition~$\star$ holds for $\sigma  \in T$ if there exists $D \subseteq \N$ of positive upper density such that for all $X, Y \in [T] \cap [\sigma]$, the set $\{ x \in D \mid \Phi^X(x) \ne \Phi^Y(x)\}$ has density zero.
\end{quote}

As this condition is the central new idea in the proof of Theorem~\ref{thm: tight}, we will attempt to explain the underlying intuition.
Essentially, the functional $\Phi$ can behave in one of two ways  with respect to $[T] \cap [\sigma]$. Firstly, for some large set $D$, any two elements of $\Phi([T] \cap [\sigma])$   agree on ``almost all'' $n \in D$. Secondly, given any  large set $D$, there are two elements in $\Phi([T] \cap [\sigma])$ which disagree on a significant amount of $D$. Our approach for refining $T$ differs under these two cases. In the first case, that is, if  condition~$\star$ holds on $\sigma$, the refinement of $T$ is simple because no matter which paths we take in $[T] \cap [\sigma]$, their $\Phi$-images will always agree on ``almost all'' of $D$. This approach is pursued in Lemma~\ref{lem: star}. For the second case,  consider a situation in which we have a finite number of paths $\{X_0, \ldots, X_n\}$ in $[T]$ that agree  under $\Phi$ on a fixed large set $D$, that is,  if  $i, j \in \{0, \ldots, n\}$, $m \in D$ then $\Phi^{X_i}(m)=\Phi^{X_j}(m)$. 
If  we take two paths in $[T] \cap [\sigma]$ whose $\Phi$-images disagree on a significant amount $D$, then one of those paths must agree with paths in $\{X_0, \ldots, X_n\}$  on a significant amount of $D$ under $\Phi$. In Lemma~\ref{lem: not star} we will use this idea to build a suitable refinement of $T$.

\begin{lemma}
\label{lem: star}
If condition $\star$ holds for~$\sigma$ then $f_\sigma$ is total.
\end{lemma}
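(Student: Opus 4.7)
My plan is to prove the search at each stage of the construction of $f_\sigma$ succeeds, arguing stage by stage by induction on $n$. Assume condition~$\star$ holds with witness $D \subseteq \N$ of upper density at least some $\delta > 0$, and suppose inductively that $f_\sigma$ has been defined on all strings of length $\le n$, producing $\{\sigma_1,\ldots,\sigma_{2^n}\} \subseteq T$, each extending $\sigma$. The goal is to exhibit $m > n$, nodes $\xi_i \succeq \sigma_i$ in $T$, and $D' \subseteq \{0,\ldots,m-1\}$ satisfying conditions (i)--(iii) of the construction; since the search is computable and enumerates all such candidates, once existence is established the search will terminate.

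To exhibit these witnesses, I would for each $i \le 2^n$ fix an arbitrary path $X_i \in [T] \cap [\sigma_i]$ (this exists because $T$ is perfect and $\sigma_i \in T$). Each $X_i$ lies in $[T] \cap [\sigma]$, so by condition~$\star$, for every pair $i < j$ the set $E_{ij} := \{x \in D \mid \Phi^{X_i}(x) \ne \Phi^{X_j}(x)\}$ has density zero. A finite union of density-zero sets has density zero, so $E := \bigcup_{i<j} E_{ij}$ has density zero, and Lemma~\ref{lem: removal} then gives $\rho(D \setminus E) \ge \delta$. By definition of upper density, there are infinitely many~$m$ with $|(D \setminus E) \cap \{0,\ldots,m-1\}| > (\delta/2) m$, and since $h \in o(n)$ we also have $h(m) < (\delta/2) m$ for all sufficiently large~$m$. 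Choose any $m > n$ satisfying both inequalities, and set $D' := (D \setminus E) \cap \{0,\ldots,m-1\}$; then $|D'| > h(m)$ and, by construction of $E$, the values $\Phi^{X_1}(x),\ldots,\Phi^{X_{2^n}}(x)$ all coincide for every $x \in D'$.

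It remains to move from the infinite paths $X_i$ to finite strings $\xi_i \in T$. Since $\Phi$ is a truth-table functional, there is a computable function bounding the use of $\Phi^X(x)$ in $x$ alone, so I may pick $N$ large enough that for each $i$, $\xi_i := X_i \uh N$ satisfies $\Phi^{\xi_i}(x) \halts = \Phi^{X_i}(x)$ for all $x < m$. Each $\xi_i$ lies in $T$ and extends $\sigma_i$, so (i), (ii), (iii) all hold. This completes the inductive step, and hence $f_\sigma$ is total.

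The main obstacle (and the crux of why condition~$\star$ is the right hypothesis) is that $D$ need not be computable and the $X_i$ are not computable either, so one cannot simply read off $D'$ from $D$ or compute agreement on-the-fly. The argument sidesteps this by using the non-computable data $D$ and $X_1,\ldots,X_{2^n}$ only to \emph{witness existence} of finite objects $\xi_i, m, D'$; once existence is guaranteed, the computable search in the definition of $f_\sigma$ (which examines $T$, computes initial segments of $\Phi$, and checks the three conditions) will locate some such witnesses and terminate. A minor technical point worth being careful about is choosing $m$ large enough to simultaneously satisfy $|(D\setminus E)\cap\{0,\ldots,m-1\}| > h(m)$ and $m > n$, which follows from combining the $\limsup$ lower bound on $D\setminus E$ with $h(m)=o(m)$.
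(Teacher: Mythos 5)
Your proof is correct and follows essentially the same approach as the paper: use condition~$\star$ together with Lemma~\ref{lem: removal} to shrink $D$ to a positive-upper-density set on which the $\Phi^{X_i}$ all agree, then exploit $h\in o(n)$ to find a suitable $m$, and truncate the paths to finite $\xi_i$ using that $\Phi$ is a tt-functional. The only cosmetic differences are that the paper compares each $X_i$ to $X_1$ (removing $2^n-1$ density-zero sets) rather than all pairs, and fixes the leftmost paths rather than arbitrary ones; neither affects the argument.
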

\begin{proof}
Let $D$ be a set of positive upper density that witnesses that condition $\star$ holds for $\sigma$. Assume that level $n$  of $f_\sigma$ has been defined and that this level is equal to $\{\sigma_1, \ldots, \sigma_{2^n}\}$. For all $i\in \{1, \ldots, 2^n\}$, let $X_i$ be the left-most path of $T$ above $\sigma_i$.

For $i\in  \{2, 3, \ldots, 2^n\}$  define $S_i = \{x \in D \mid \Phi^{X_1}(x) \ne \Phi^{X_i}(x)\}$. According to condition $\star$, for each $i$, the set $S_i$ has density $0$.
Thus if we let  $\hat{D} = D \setminus (S_2 \cup S_3 \cup \ldots \cup S_{2^n})$ we have that $\hat{D}$ is a set of positive upper density by repeated application of Lemma~\ref{lem: removal}. Further
for all $x \in \hat{D}$,
\[\Phi^{X_1}(x) = \Phi^{X_2}(x) = \ldots = \Phi^{X_{2^n}}(x).\]
As $h(n) \in o(n)$,  there exists some $m>n$ such that
$\rho_{\hat{D}}(m) > h(m)/m$.
For each $i$, let $\xi_i \in T$ be a sufficiently long initial segment of $X_i$  such that $\Phi^{\xi_i}(x)\downarrow$ for all $x \le m$. This means that $m$, $\{\xi_1, \xi_2, \ldots, \xi_{2^n}\}$ and $\hat{D} \cap \{0, \ldots, m-1\}$ meet the  conditions to define the next level of $f_\sigma$ and hence the search must end successfully at some point.
\end{proof}

\begin{lemma}
\label{lem: not star}
If condition $\star$ does not holds for any $\sigma \in T$ then $f_\lambda$ is total.
\end{lemma}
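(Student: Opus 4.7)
The plan is to show by induction on $n$ that, under the hypothesis that $\star$ fails at every $\sigma \in T$, the search in the construction of $f_\lambda$ succeeds at level $n$. The base case is trivial. For the inductive step, suppose $f_\lambda$ has been defined up to level $n$, with pairwise incomparable images $\sigma_1,\ldots,\sigma_{2^n} \in T$. The heart of the argument will be the following combinatorial claim: there exist paths $X_i \in [T]\cap[\sigma_i]$ for $i=1,\ldots,2^n$ and a set $D^* \subseteq \N$ of positive upper density such that $\Phi^{X_1}(x) = \cdots = \Phi^{X_{2^n}}(x)$ for every $x \in D^*$. Granted the claim, since $h \in o(n)$ and $\rho(D^*) > 0$, one can find $m > n$ with $|D^* \cap \{0,\ldots,m-1\}| > h(m)$; letting $\xi_i$ be an initial segment of $X_i$ that is long enough to be a branching node of $T$ and for $\Phi^{\xi_i}$ to be defined on $\{0,\ldots,m-1\}$ (which exists because $\Phi$ is a tt-functional and $T$ is perfect), the tuple $\xi_1,\ldots,\xi_{2^n}$ together with $D = D^* \cap \{0,\ldots,m-1\}$ satisfies conditions (i)--(iii) of the search.

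I would prove the claim by choosing the $X_i$ one at a time, maintaining the invariant that after stage $i$ the paths $X_1,\ldots,X_i$ agree pointwise on a set $D_i$ of positive upper density; denote by $v(x)$ their common value at $x \in D_i$. Start with any $X_1 \in [T]\cap[\sigma_1]$ and $D_1 = \N$. At stage $i$, apply the failure of $\star$ at $\sigma_i$ with witness $D_{i-1}$ to obtain $A, B \in [T]\cap[\sigma_i]$ and a set $E \subseteq D_{i-1}$ of positive upper density on which $\Phi^A(x) \ne \Phi^B(x)$. Since these are two distinct values in $\{0,1\}$, the previous common value $v(x)$ must equal either $\Phi^A(x)$ or $\Phi^B(x)$ at each $x \in E$, which partitions $E$ into two disjoint sets. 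By subadditivity of upper density, at least one of these two sets has positive upper density. Choosing $X_i$ accordingly from $\{A, B\}$ and setting $D_i$ to be that set preserves the invariant, so iterating $2^n$ times yields the claim.

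The main obstacle to anticipate is the risk of proving too much. A naive attempt might try to find extensions so that \emph{every} path through each $\sigma_i$ agrees under $\Phi$, which would amount to establishing $\star$ at $\sigma_i$ and contradict the hypothesis. The iterative approach dodges this by committing to a single specific path per $\sigma_i$, and it leverages the binary alphabet together with the subadditivity of upper density to refine rather than destroy the agreement set at each stage. Since only $2^n$ refinements take place, the density remains positive throughout and the search can always be completed, which gives the desired totality of $f_\lambda$.
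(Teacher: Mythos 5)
Your proof is correct and follows essentially the same route as the paper: fix the level-$n$ nodes, build paths $X_1,\ldots,X_{2^n}$ one at a time while maintaining an agreement set of positive upper density, and at each step invoke the failure of $\star$ to get a disagreeing pair $A,B$, then split the disagreement set by comparison with the common value and keep the half of positive upper density. The only cosmetic differences are that the paper picks the left-most path for $X_1$ and appeals to Lemma~\ref{lem: removal} where you invoke subadditivity of upper density; these are interchangeable.
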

\begin{proof}
Assume that the $n$-th level of $f_\lambda$ has been defined and that this level is equal to $\{\sigma_1, \ldots, \sigma_{2^n}\}$. We argue by induction; let $D_1= \N$ and let $X_1$ be the left-most path above~$\sigma_1$.

For the induction step assume that for some $i < 2^n$, we have defined $D_i$ and $X_1 \succ \sigma_1, \ldots, X_i \succ \sigma_i$ such that $D_i$ is a set of positive upper density and for all $x \in D_i$
\[\Phi^{X_1}(x) = \Phi^{X_2}(x) = \ldots = \Phi^{X_{i}}(x).\]
The set $D_{i}$ has positive upper density; and since condition $\star$ fails above all $\sigma \in T$, it fails in particular above $\sigma_{i+1}$. Therefore there exist   $Y, Z \in [T] \cap [\sigma_{i+1}]$ such that the set
$E =\{x \in D_i \mid \Phi^Y(x) \ne \Phi^Z(x)\}$ does not have density zero i.e.\ $E$ has  positive upper density.  Now we can partition $E$ into the sets:
$E_1 = \{x \in E \mid \Phi^Y(x) = \Phi^{X_1}(x)\}$ and $E_2= \{x \in E \mid \Phi^Z(x) = \Phi^{X_1}(x)\}$. At least one of these sets has positive upper density  by Lemma~\ref{lem: removal}. If  $E_1$  has positive upper density we let $D_{i+1} = E_1$ and $X_{i+1} = Y$. Otherwise we let $D_{i+1} = E_2$ and $X_{i+1} = Z$.

Hence the set $D_{2^n}$ has positive upper density and for all $x\in D_{2^n}$ we have
\[\Phi^{X_1}(x) = \Phi^{X_2}(x) = \ldots = \Phi^{X_{2^n}}(x).\]
As argued in the previous lemma, this is sufficient to show that the construction of $f_\lambda$ can continue.
\end{proof}
\begin{proof}[Proof of Theorem \ref{thm: tight}]
We will show that if $\Phi$ is a truth-table functional and $h$ is a computable function such that
$h \in o(n)$, then the set \eqref{eq: dense} is dense in $\po$. Take any $T \in \po$. Consider the construction of the functions $f_\sigma$ and $g_\sigma$ with respect to $T$, $\Phi$ and $h$. By Lemmas~\ref{lem: star} and~\ref{lem: not star} for some $\sigma \in T$ the function $f_\sigma$ is total. Let $\hat{T}$ be the downward closure of the range of $f_\sigma$. $\hat{T}$ is a computable perfect tree and
$\hat{T} \le T$. Now for all $X \in [\hat{T}]$, $\Phi^X$  extends $g_\sigma$. Further $\kappa_{\dom({g_\sigma})} \not \in o(h)$ because when $f_\sigma$ is defined on all strings of length $n$, for some $m > n$ the construction  ensures that $\kappa_{(\dom{g_\sigma})}(m) > h(m)$.

Let $\{T_i\}_{i \in \N}$ be a descending sequence in $\po$ that meets all of the sets of the form \eqref{eq: dense} as well as those dense sets that ensure non-computability and being of computably dominated degree. If $X \in \bigcap_i[T_i]$, then $X$ is non-computable. Now if $Y\le_T X$ then because $X$ is of computably dominated degree $Y = \Phi^X$ for some truth-table functional $\Phi$. Hence for any computable function $h \in o(n)$, there is a partial computable function $g$ such that $Y$ extends~$g$ and $\kappa_{\dom(g)} \not \in o(h)$.
\end{proof}

One should notice that in the above proof, while the condition $\star$ is  $\Sigma^1_2$, the construction can be carried out using $\emptyset''$ because it is not necessary to determine if $\star$  holds. All that is required is to find a string $\sigma$ such that $f_\sigma$ is total.

\vspace*{1cm}

\textbf{Acknowledgements.} We would like to thank Alexander Shen for pointing out to us the notion of list decoding, which we only implicitly used in earlier presentations of our results. Thanks also go to Carl Jockusch and the anonymous referee for useful comments and suggestions.

      \bibliographystyle{plain}
      \bibliography{bib}

\end{document}